\theoremstyle{plain}
\newtheorem{theorem}{Theorem}[section]
\newtheorem{lemma}[theorem]{Lemma}
\newtheorem{proposition}[theorem]{Proposition}
\theoremstyle{definition}
\newtheorem{definition}[theorem]{Definition}
\newtheorem{remark}[theorem]{Remark}
\newcommand{\C}{\mathbb{C}}
\newcommand{\sB}{\mathscr{B}}
\newcommand{\Z}{\mathbb{Z}}
\newcommand{\CP}{\mathbb{CP}}
\newcommand{\para}[1]{\medskip\noindent\textbf{#1.}}
\DeclareMathOperator{\Poly}{Poly}
\DeclareMathOperator{\UConf}{UConf}
\DeclareMathOperator{\PConf}{PConf}
\DeclareMathOperator{\push}{push}
\DeclareMathOperator{\twist}{twist}
\DeclareMathOperator{\interior}{int}
\DeclareMathOperator{\im}{im}
\author{Peter Huxford}
\author{Nick Salter}
\address{Peter Huxford: Department of Mathematics, University of Chicago, 5734 S. University Ave., Chicago, IL 60637}
\email{pjhuxford@uchicago.edu}
\address{Nick Salter: Department of Mathematics, University of Notre Dame, 255 Hurley Building, Notre Dame, IN 46556}
\email{nsalter@nd.edu}
\title{Noninjectivity of the monodromy of certain equicritical strata}
\date{November 4, 2024}
\begin{document}
\begin{abstract}
    An equicritical stratum is the locus of univariate monic squarefree complex polynomials where the critical points have prescribed multiplicities. Tracking the positions of both roots and critical points, there is a natural ``monodromy map'' taking the fundamental group into a braid group. We show here that when there are exactly two critical points, this monodromy map is noninjective.
\end{abstract}

\maketitle

\section{Introduction}

Let $\Poly_n(\C)$ denote the space of monic squarefree complex polynomials of degree $n$. Identifying monic polynomials with their roots induces an isomorphism with the space $\UConf_n(\C)$ of unordered $n$-tuples in $\C$. From the polynomial perspective, it is natural to consider the stratification on $\Poly_n(\C)$ induced by the multiplicities of the critical points of $f(z) \in \Poly_n(\C)$, or equivalently the multiplicities of the roots of $f'(z)$. For a partition $\kappa = \{k_1, \dots, k_r\}$ of $n-1$, define the {\em equicritical stratum} $\Poly_n(\C)[\kappa]$ as the locus in $\Poly_n(\C)$ consisting of those $f$ whose critical points have multiplicities specified by $\kappa$. 

Each $\Poly_n(\C)[\kappa]$ admits a natural map into the space $\UConf_{n+r}(\C)$ by associating to $f$ the $(n+r)$-tuple of roots and critical points (note that a root coincides with a critical point if and only if $f$ is not squarefree). Moreover, after lifting to a finite cover of $\UConf_{n+r}(\C)$ that distinguishes roots and critical points, this map is an embedding. Passing to the fundamental group, this induces a {\em monodromy homomorphism}
\[
\rho: \sB_n[\kappa] \to B_{n+r},
\]
where we have defined $\sB_n[\kappa]:= \pi_1(\Poly_n(\C)[\kappa])$ and $B_{n+r} = \pi_1(\UConf_{n+r}(\C))$. The latter is the usual braid group (here on $n+r$ strands), while we call the former a {\em stratified braid group}. 

Equicritical strata are simple examples of families of smooth varieties embedded in some fixed ambient space, subject to certain geometric constraints. Generally in such problems, it is of interest to understand how much of the topology of such a family is captured by various flavors of monodromy representation. There are at least two variants of monodromy one can study: the {\em intrinsic} monodromy, valued in some automorphism group associated to the variety itself (in our setting, this is just the induced permutation on the roots and critical points), as well as the finer {\em embedded} monodromy, which tracks the automorphism of the {\em pair} of the ambient space and the subvariety.

The importance of the embedded monodromy has been recognized by many authors in many problems, going at least as far back as the work of Dolgachev--Libgober \cite{DL} in the context of families of smooth degree-$d$ projective hypersurfaces. In spite of this, there have been only sporadic results. The purpose of this paper is to show that for a certain class of equicritical strata, namely those with exactly two critical points, the embedded monodromy $\rho$ is, somewhat to the authors' surprise, non-injective. 

\begin{theorem}\label{maintheorem}
For all $n\geq3$ and all $p,q\geq1$ such that $p+q=n-1$, the monodromy map
\[
  \rho \colon \sB_n[p,q] \to B_{n+2}
\]
is not injective.
\end{theorem}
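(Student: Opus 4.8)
The plan is to first make the group $\sB_n[p,q]$ completely explicit. Since $f'(z)=n(z-a)^p(z-b)^q$, a polynomial in the stratum is determined by its ordered pair of critical points $(a,b)$ together with a constant of integration $c$, via $f(z)=c+\int_a^z n(t-a)^p(t-b)^q\,dt=:c+F(z)$. The squarefree conditions $f(a)\neq0$, $f(b)\neq0$ read $c\neq0$ and $c\neq -F(b)$, where $F(b)=C_{p,q}(b-a)^n$ for an explicit nonzero constant $C_{p,q}$. Translation in $a$ contributes a contractible factor; setting $w=b-a$, a reparametrization linear in $c$ carries each fibre $\{c\neq0,\ c\neq -F(b)\}$ to $\C\setminus\{0,1\}$ and yields a homeomorphism of the remaining space onto $\C^\times\times(\C\setminus\{0,1\})$. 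Hence $\sB_n[p,q]\cong\Z\times F_2$, with the $\Z$ generated by the loop $\tau$ that rotates $b$ about $a$ (equivalently, the global rescaling loop), and $F_2=\langle x_0,x_1\rangle$ generated by loops of $c$ around its two forbidden values while $(a,b)$ is held fixed. (When $p=q$ one first passes to the double cover labelling the two critical points; the subgroup used below lifts isomorphically.)

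Next I would compute $\rho$ on these generators and reduce the problem to the free factor. Because $x_0,x_1$ keep $a$ and $b$ fixed throughout, $\rho|_{F_2}$ is precisely the braid monodromy of the degree-$n$ branched cover $F\colon\C\to\C$, with the two critical points recorded as constant strands; write $\beta_0=\rho(x_0)$ and $\beta_1=\rho(x_1)$. The loop $\tau$ rigidly rotates all $n+2$ marked points, so $\rho(\tau)$ is the full twist $\Delta^2_{n+2}$. In any $\rho(v)$ with $v\in F_2$ the two critical strands are constant, so they have linking number $0$; in $\Delta^{2k}_{n+2}$ they have linking number $k$. Thus $\rho(\tau^k v)=1$ forces $k=0$, giving $\ker\rho=\ker(\rho|_{F_2})$. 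It therefore suffices to exhibit a nontrivial relation between $\beta_0$ and $\beta_1$.

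The hypothesis $p+q=n-1$ is what makes such a relation available. The critical values of $F$ have branching orders $p+1$ and $q+1$, and $(p+1)+(q+1)=n+1$, so the $p+1$ roots that collide at $a$ and the $q+1$ roots that collide at $b$ meet in exactly one root $r_\ast$. Concretely, the preimage under $F$ of an arc joining the two critical values is a ``double star'': a central edge $a$--$b$ carrying $r_\ast$, with $p$ further edges at $a$ and $q$ at $b$, so that $\beta_0$ and $\beta_1$ are the cyclic rotation braids of the two stars and share only the strand $r_\ast$. I would record the basic identities $\beta_0^{\,p+1}=T_{\gamma_a}$ and $\beta_1^{\,q+1}=T_{\gamma_b}$ (twists about curves enclosing the respective stars and their critical points) and $(\beta_0\beta_1)^{-1}=\delta_n$, the rotation-at-infinity braid.

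The crux---and the step I expect to be the main obstacle---is to produce a nontrivial word in $\beta_0,\beta_1$ that equals the identity in $B_{n+2}$; the same word is then a nontrivial element of $F_2\leq\sB_n[p,q]$ lying in $\ker\rho$. When $n=3$ (so $p=q=1$) the braids $\beta_0,\beta_1$ are half-twists about arcs meeting only at $r_\ast$, hence satisfy the braid relation $\beta_0\beta_1\beta_0=\beta_1\beta_0\beta_1$, which is not a relation in a free group; this already proves \Cref{maintheorem} in that case. For general $p,q$ I would realise $\beta_0,\beta_1$ as explicit products of band generators in the double-star model and verify an analogous single identity by an isotopy / Dehn-twist computation, the mechanism being that the unique shared root $r_\ast$ forces a coincidence impossible in $F_2$. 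The anticipated difficulties are pinning down $\beta_0,\beta_1$ exactly as elements of $B_{n+2}$ (in particular their linking with the two critical strands), proving the identity uniformly in $p$ and $q$, and checking that the resulting word remains nontrivial in $F_2$ so that the kernel element does not collapse.
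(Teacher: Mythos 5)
Your setup is the same as the paper's: the coordinates $(a,b,c)$ identify (a cover of) the stratum with $\PConf_2(\C)\times(\C\setminus\{0,1\})$ (\Cref{prop:bundle}), your $x_0,x_1$ are the paper's free generators $x,y$ (\Cref{generate-free-group}), and your $\beta_0,\beta_1$ agree with the braids computed in \Cref{explicit-monodromy}. The reduction to ``find a nontrivial relation between $\beta_0$ and $\beta_1$ in $B_{n+2}$'' is also correct (and you do not even need the linking-number argument with $\tau$ for this: any nontrivial element of $F_2\cap\ker\rho$ suffices). The problem is that this reduction is where the paper's work \emph{begins}, and you have not carried out the remaining step; worse, the one concrete relation you assert is false.

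For $n=3$ (so $p=q=1$) the two monodromies are $\beta_0=\sigma_1\sigma_2\sigma_1$ and $\beta_1=\sigma_3\sigma_4\sigma_3$ in $B_5$: each is the half-twist $\Delta_D$ of a \emph{three}-punctured disk (two colliding roots together with the enclosed critical point), the two disks $D_1\supseteq\{z_1,0,z_2\}$ and $D_2\supseteq\{z_2,1,z_3\}$ sharing only the puncture $z_2$. These do \emph{not} satisfy the braid relation. Indeed $\beta_0\beta_1\beta_0=\beta_1\beta_0\beta_1$ is equivalent to $\beta_1\beta_0\beta_1^{-1}=\beta_0^{-1}\beta_1\beta_0$, i.e.\ to $\Delta_{\beta_1(D_1)}=\Delta_{\beta_0^{-1}(D_2)}$; but $\beta_1$ sends $z_2$ to $z_3$ while $\beta_0^{-1}$ sends $z_2$ to $z_1$, so $\beta_1(D_1)$ encloses $\{z_1,0,z_3\}$ whereas $\beta_0^{-1}(D_2)$ encloses $\{z_1,1,z_3\}$, and half-twists about disks enclosing different puncture sets are distinct (their canonical reduction systems are the distinct boundary curves). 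The classical fact you are invoking applies to half-twists about \emph{arcs} sharing an endpoint, not to half-twists of larger disks sharing a puncture; here the critical points are themselves strands of $B_{n+2}$ and cannot be ignored. So the $n=3$ case is not ``already proved,'' and for general $p,q$ you offer only the expectation of ``an analogous single identity,'' which is precisely the missing content. The relation that actually works is considerably more delicate: the paper shows (\Cref{kernel-element}, via \Cref{magic-word-identity}) that $\rho(yx^py^{-1}xy)$ can be rewritten as a product of braids each commuting with $\rho(x^{p+1})=\twist_{p+2}$ (the Dehn twist about $\partial D_1$), so that the commutator $[x^{p+1},\,yx^py^{-1}xy]$ --- nontrivial in $F_2$ because the centralizer of $x^{p+1}$ there is $\langle x\rangle$ --- lies in $\ker\rho$. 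Without producing such a word and verifying the identity in $B_{n+2}$, your argument does not establish \Cref{maintheorem} for any $n$.
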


\para{Context: equicritical strata} The equicritical strata were introduced by the second-named author in \cite{eqstrat1} (though are implicit in the work \cite{DM1,DM2} of Dougherty--McCammond). Subsequently in \cite{eqstrat2}, the image of $\rho$ in $B_{n+r}$ was completely determined. In the nontrivial regime $r > 1$, the image turns out to be a ``framed braid group'' --- a certain infinite-index subgroup of $B_{n+r}$ consisting of braids that preserve a certain ``winding number function'' assigning winding numbers to arcs on a punctured disk. Ultimately this constraint arises from the geometric interpretation of the logarithmic derivative of $f$ (a meromorphic differential on $\CP^1$) as a translation surface. In general the stratified braid groups $\sB_n[\kappa]$ themselves are rather mysterious, but can be understood as an extension of their image in $B_{n+r}$ by $\ker(\rho)$. Following the work of \cite{eqstrat2} which determines $\im(\rho)$, attention shifts to the problem of understanding $\ker(\rho)$. Note, however, that in the case $r=2$ studied here, the algebraic structure of $\sB_n[\kappa]$ is well-understood --- \Cref{prop:bundle} shows that $\sB_n[p,q] \cong F_2 \times \Z$ (in the case $p = q$, only after passing to a subgroup of index $2$). \\

\noindent\textbf{What about higher r?} It is not difficult to see that the explicit kernel elements constructed in \Cref{section:construction} generalize to kernel elements for general equicritical strata. The issue is that some new mechanism for certifying their nontriviality in $\sB_n[\kappa]$ must be developed. In the case $r = 2$, the derivative map $D$ studied in \Cref{section:bundle} realizes $\Poly_n(\C)[p,q]$ as the total space of a fiber bundle with fiber having free fundamental group. As observed in \Cref{remark:notbundle}, this no longer holds as soon as $r \ge 3$. It is not clear to the authors whether $\rho$ should be injective for $r \ge 3$.

\para{Outline of proof and organization of paper} In \Cref{section:bundle}, we show that, after possibly passing to a two-sheeted cover, $\Poly_n(\C)[p,q]$ can be identified with a {\em product} $\PConf_2(\C) \times (\C \setminus\{0,1\})$ (here and throughout, $\PConf_n(\C)$ denotes the configuration space of {\em ordered} $n$-tuples in $\C$), and describe a pair of explicit free generators $x,y$ for the fundamental group of the fiber. The monodromy image of $x,y$ is determined in \Cref{section:description}. Then in \Cref{section:construction}, we exhibit an explicit nontrivial word in $x,y$ and prove that it lies in the kernel of $\rho$.

\para{Acknowledgments} The second-named author gratefully acknowledges support from the National Science Foundation (grants no. DMS-2153879 and DMS-2338485). 

\section{A global description of $\Poly_n(\C)[p,q]$}\label{section:bundle}

We now fix $p,q\geq1$, and let $n=p+q+1$.

\subsection{Bundle structure}

\begin{definition}
  Let $\Poly_n(\C)[[p,q]]$ be the (one- or two-sheeted) connected cover of $\Poly_n(\C)[p,q]$ that equips a polynomial with an ordering of its two critical points.
\end{definition}

Note that this cover is two-sheeted if and only if $p=q$.

\begin{proposition}\label{prop:bundle}
  There is an isomorphism of varieties
  \[
  \Poly_n(\C)[[p,q]] \cong \PConf_2(\C) \times \left(\C \setminus \{0,1\}\right).
  \]
\end{proposition}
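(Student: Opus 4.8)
The plan is to exhibit an explicit pair of mutually inverse morphisms. A point of $\Poly_n(\C)[[p,q]]$ is a monic squarefree polynomial $f$ of degree $n$ together with an ordering $(c_1,c_2)$ of its two critical points, with $c_1$ of multiplicity $p$ and $c_2$ of multiplicity $q$. Since $f$ is monic of degree $n$, its derivative is forced to be $f'(z)=n(z-c_1)^p(z-c_2)^q$, so $f$ is recovered from $(c_1,c_2)$ by integration, uniquely up to the constant of integration. Concretely, I would fix the antiderivative $g_{c_1,c_2}(z)$ of $n(z-c_1)^p(z-c_2)^q$ having vanishing constant term; its coefficients are polynomials in $c_1,c_2$, and every admissible $f$ has the form $f=g_{c_1,c_2}+C$ for a unique $C\in\C$. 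The squarefree condition is that $f$ and $f'$ share no root, and since the only roots of $f'$ are $c_1$ and $c_2$, this is equivalent to $f(c_1)\neq 0$ and $f(c_2)\neq 0$.

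With this normal form in hand, I would define the candidate isomorphism $\Phi\colon \Poly_n(\C)[[p,q]]\to \PConf_2(\C)\times(\C\setminus\{0,1\})$ by $\Phi(f)=\big((c_1,c_2),\,w\big)$ with $w=f(c_1)/(f(c_1)-f(c_2))$. The inverse sends $((c_1,c_2),w)$ to $f=g_{c_1,c_2}+C$, where solving $w=f(c_1)/(f(c_1)-f(c_2))$ for the constant gives $C=w\big(g_{c_1,c_2}(c_1)-g_{c_1,c_2}(c_2)\big)-g_{c_1,c_2}(c_1)$. A direct substitution, using $f(c_1)=w\big(g_{c_1,c_2}(c_1)-g_{c_1,c_2}(c_2)\big)$ and $f(c_1)-f(c_2)=g_{c_1,c_2}(c_1)-g_{c_1,c_2}(c_2)$, shows the two assignments are mutually inverse. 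Moreover $w=0$ exactly when $f(c_1)=0$ and $w=1$ exactly when $f(c_2)=0$, so the image point lies in $\C\setminus\{0,1\}$ precisely when $f$ is squarefree, matching the two factors on the nose.

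The step deserving the most care—and the main obstacle—is to verify that $w$ is a genuine coordinate, i.e. that the denominator $f(c_1)-f(c_2)$ never vanishes on the stratum; otherwise $\Phi$ is neither everywhere defined nor valued in $\C\setminus\{0,1\}$. Since $f(c_1)-f(c_2)=g_{c_1,c_2}(c_1)-g_{c_1,c_2}(c_2)=n\int_{c_2}^{c_1}(t-c_1)^p(t-c_2)^q\,dt$, the substitution $t=c_1+s(c_2-c_1)$ reduces this to $(c_2-c_1)^{n}$ times the nonzero constant $\pm\,n\,p!\,q!/(p+q+1)!$, a nonzero Beta-function value; as $c_1\neq c_2$ on $\PConf_2(\C)$, the difference of critical values is always nonzero. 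This simultaneously shows that the two excluded values $0$ and $1$ are genuinely distinct and that $w$ is regular where defined. Finally I would confirm that both maps are morphisms of varieties: $g_{c_1,c_2}$, and hence $C$, depend polynomially on $(c_1,c_2,w)$, so the inverse is a morphism, while on $\Poly_n(\C)[[p,q]]$ the ordered critical points $(c_1,c_2)$ are regular functions by construction of the cover and $w$ is a regular function of them together with the coefficients of $f$, with nowhere-vanishing denominator. Thus $\Phi$ is a morphism, and being a mutually inverse pair the two maps furnish the asserted isomorphism of varieties.
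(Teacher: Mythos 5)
Your proof is correct and follows essentially the same route as the paper: both parametrize $\Poly_n(\C)[[p,q]]$ by the ordered critical points together with one remaining scalar, reduce the squarefree condition to the nonvanishing of the two critical values, show the difference of critical values equals a nonzero (Beta-integral) constant times $(c_1-c_2)^n$, and take the normalized ratio of critical values as the coordinate on $\C\setminus\{0,1\}$ --- your $w=f(c_1)/(f(c_1)-f(c_2))$ is exactly the paper's coordinate $d$. The only difference is presentational: you write out explicit mutually inverse morphisms, whereas the paper identifies the stratum with a Zariski-open subset of $\C^3$ and performs a change of coordinates.
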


\begin{proof}
  An element of $\Poly_n(\C)[[p,q]]$ is determined by
  \begin{itemize}
  \item the ordered pair of its critical points,
  \item the constant term of the underlying polynomial.
  \end{itemize}

  This allows us to view $\Poly_n(\C)[[p,q]]$ as a Zariski-open subset of $\C^3$. To more easily describe what this subset actually is, we choose the following coordinates $(a,b,c)$ on $\C^3$, where:
  \begin{itemize}
  \item $a$ is the first critical point (of multiplicity $p$),
  \item $b$ is the second critical point (of multiplicity $q$),
  \item $c$ is the value of the underlying polynomial at $a$.
  \end{itemize}
  The polynomial associated to the triple $(a,b,c)$ is
  \[
    f_{(a,b,c)}(z) = n\int_a^z(u-a)^p(u-b)^q\,du + c.
  \]
  The critical points of $f_{(a,b,c)}$ are $a$ and $b$. Thus the critical values are $f_{(a,b,c)}(a)=c$ and
  \begin{align*}
    f_{(a,b,c)}(b) &= n\int_a^b(u-a)^p(u-b)^q\,du + c = n(b-a)^n\int_0^1v^p(v-1)^q\,dv + c = \mu (b-a)^n + c,
  \end{align*}
  where $\mu\neq0$ is the constant $n\int_0^1v^p(v-1)^q\,dv$. Therefore
  \begin{equation*}\label{eq:subset-of-c3}
    \Poly_n(\C)[[p,q]] = \C^3 - V(b-a) - V(c) - V(\mu (b-a)^n + c).
  \end{equation*}
    Making the change of coordinates $d = \frac{-c}{\mu (b-a)^n}$ gives the identification
    \[
    \Poly_n(\C)[[p,q]] = \C^3 - V(b-a) - V(d) - V(d-1),
    \]
    from which the identification
    \[
    \Poly_n(\C)[[p,q]] \cong \PConf_2(\C) \times \left(\C \setminus \{0,1\}\right)
    \]
    follows.
\end{proof}

\begin{remark}
  The projection map $D: \Poly_n(\C)[[p,q]] \to \PConf_2(\C)$ is a disguised form of the ordinary derivative, assigning as it does to $f \in \Poly_n(\C)[[p,q]]$ the ordered tuple of its critical points. The proof of \Cref{prop:bundle} shows that $D$ realizes $\Poly_n(\C)[[p,q]]$ as a globally trivial fiber bundle over $\PConf_2(\C)$ with fiber the twice-punctured plane. When $p = q$, $D$ descends to realize $\Poly_n(\C)[p,p]$ as a {\em nontrivial} bundle over $\UConf_2(\C)$ with fiber the twice-punctured plane. 
\end{remark}

\begin{remark}\label{remark:notbundle}
  For stratified configurations spaces for at least 3 critical points, the map $D$ recording the critical points is no longer a locally trivial fibration. For example, for the map $\Poly_4[1,1,1](\C)\to\UConf_3(\C)$ that records a polynomial's critical points, the fiber over a configuration in $\UConf_3(\C)$ is generically a three-times punctured complex plane, but it is a twice punctured complex plane if and only if the configuration has the form $\{u,v,(u+v)/2\}$ where $u\neq v$.
\end{remark}

\subsection{Explicit free generators}

\begin{definition}
  We define two elements $x,y\in\sB_n[p,q]$ as follows. Let $f(z)$ be the polynomial
  \begin{equation}\label{eq:f}
    f(z) = n\int_0^zu^p(u-1)^q\,du.
  \end{equation}
  Note that $f(z)-c\in\Poly_n(\C)[p,q]$ if and only if $c\in\C-\{0,\mu\}$, where $\mu\neq0$ is the constant $n\int_0^1u^p(u-1)^q\,du$. We consider elements of $\sB_n[p,q]$ that can be represented as loops of the form $f(z)-\gamma(t)$, where $\gamma(t)$ is a path in $\C-\{0,\mu\}$.

  Let $x,y\in\sB_n[p,q]$ be the elements corresponding to $\gamma(t)$ being a counter-clockwise circular loop around $0$, and around $\mu$, respectively, both based at $\mu/2$.
\end{definition}

Implicit in this definition we are taking $f-\mu/2$ for our base point of $\Poly_n(\C)[p,q]$. That is, we are identifying $\sB_n[p,q]$ with the fundamental group $\pi_1(\Poly_n(\C)[p,q],f-\mu/2)$.

\begin{lemma}\label{generate-free-group}
  The elements $x,y\in\sB_n[p,q]$ generate a free group $F_2$ of rank 2.
\end{lemma}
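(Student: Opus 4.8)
The plan is to identify $x$ and $y$ with the standard free generators of the fundamental group of a single fiber of the bundle map $D$ from \Cref{prop:bundle}, and then to argue that this fiber subgroup injects into $\sB_n[p,q]$.

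First I would observe that the loops defining $x$ and $y$, namely $t \mapsto f(z) - \gamma(t)$ with $\gamma$ a loop in $\C - \{0,\mu\}$, hold the critical points fixed: every polynomial $f(z) - \gamma(t)$ has the same derivative $f'(z) = n z^p (z-1)^q$, hence the same ordered pair of critical points $(0,1)$. Consequently these loops lie in a single fiber $D^{-1}(0,1)$, and in particular they lift to loops in the ordered cover $\Poly_n(\C)[[p,q]]$ --- there is no monodromy permuting the two critical points, so the cover being two-sheeted when $p = q$ causes no difficulty. Thus $x$ and $y$ may be regarded as elements of $\pi_1(\Poly_n(\C)[[p,q]])$ lying in the image of the fiber inclusion.

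Next I would track these loops through the identification of \Cref{prop:bundle}. Over the basepoint configuration $(a,b) = (0,1)$, the coordinate $c$ of that proof (the critical value at the first critical point) specializes to $-\gamma(t)$, and the change of variables $d = -c/(\mu(b-a)^n)$ becomes simply $d = \gamma(t)/\mu$. Hence $\gamma$ encircling $0$ (resp.\ $\mu$) corresponds to $d$ encircling $0$ (resp.\ $1$), with basepoint $d = 1/2$. Therefore $x$ and $y$ are precisely the two standard loops around the punctures of the fiber $\C - \{0,1\}$, which freely generate $\pi_1(\C - \{0,1\}) \cong F_2$.

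Finally I would assemble the injectivity. Because \Cref{prop:bundle} exhibits $\Poly_n(\C)[[p,q]]$ as a product, the projection to the second factor splits the fiber inclusion, so $\pi_1$ of the fiber is a direct factor of $\pi_1(\Poly_n(\C)[[p,q]])$; in particular $x$ and $y$ generate a copy of $F_2$ there. Since the covering map $\Poly_n(\C)[[p,q]] \to \Poly_n(\C)[p,q]$ induces an injection on fundamental groups, this $F_2$ maps isomorphically onto the subgroup of $\sB_n[p,q]$ generated by $x$ and $y$, completing the proof. The only genuinely delicate point is the coordinate bookkeeping of the previous paragraph, confirming that the \emph{ad hoc} loops $x,y$ really are the standard punctured-plane generators of the fiber rather than some other basis; everything else is a formal consequence of the product structure and the behavior of $\pi_1$ under coverings.
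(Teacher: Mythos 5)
Your proposal is correct and follows essentially the same route as the paper: lift $x,y$ to the ordered cover $\Poly_n(\C)[[p,q]]$, use the product decomposition of \Cref{prop:bundle} to see they lie in the $\pi_1(\C-\{0,1\})\cong F_2$ factor as the standard generators, and conclude via injectivity of $\pi_1$ under covering maps. The only difference is that you carry out the coordinate bookkeeping ($c=-\gamma(t)$, $d=\gamma(t)/\mu$) explicitly, which the paper's proof asserts without computation.
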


\begin{proof}
  Since $x$ and $y$ can be represented by loops of polynomials whose critical points $\{0,1\}$ remain fixed, they lift to loops in (the one- or two-sheeted cover) $\Poly_n(\C)[[p,q]]$. The isomorphism $\Poly_n(\C)[[p,q]]\cong\PConf_2(\C)\times(\C-\{0,1\})$ from \Cref{prop:bundle} gives an isomorphism
  \[
    \pi_1(\Poly_n(\C)[[p,q]],f-\mu/2) \cong \pi_1(\PConf_2(\C),(0,1))\times\pi_1(\C-\{0,1\},1/2),
  \]
  which is isomorphic to $\Z\times F_2$. The generators $x$ and $y$ lie entirely inside the $\pi_1(\C-\{0,1\},1/2)\cong F_2$ factor, and are represented by counter-clockwise loops around 0 and 1, respectively. In particular they generate this free group of rank 2.
\end{proof}

\section{Explicit description of the monodromy homomorphism}\label{section:description}

Let $x,y\in\sB_n[p,q]$ be as in the previous section.

\begin{proposition}\label{explicit-monodromy}
  The monodromy homomorphism $\rho\colon\sB_n[p,q]\to B_{n+2}$ satisfies
  \[
    \rho(x) = \sigma_1\sigma_2\cdots\sigma_{p+1}\sigma_1, \quad \rho(y) = \sigma_{p+2}\sigma_{p+3}\cdots\sigma_{p+q+2}\sigma_{p+2}
  \]
\end{proposition}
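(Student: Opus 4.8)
The plan is to compute $\rho(x)$ and $\rho(y)$ as \emph{local braid monodromies} of the branched cover $f\colon\C\to\C$ around its two critical values $0$ and $\mu$, exploiting that each of $x,y$ is supported near a single critical point. By \Cref{generate-free-group} the loops $x,y$ keep the critical points $\{0,1\}$ fixed and move only the roots of $f(z)-\gamma(t)$. Since the critical value of $f-c$ at the critical point $0$ is $-c$ (and the other critical value is $\mu-c$), the loop $x$, along which $c$ winds once counterclockwise around $0$ and not around $\mu$, realizes exactly the monodromy of the fiber $f^{-1}(c)$ as $c$ encircles the branch point $0$; symmetrically $y$ encircles $\mu$. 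First I would homotope each $\gamma$, rel the basepoint $\mu/2$, to a small loop around the relevant critical value preceded and followed by a radial travel along the segment from $0$ to $\mu$ (which stays in $\C-\{0,\mu\}$), so that $\rho(x)$ and $\rho(y)$ become the local monodromies around the respective branch points, read in an arrangement adapted to the basepoint polynomial $f-\mu/2$.

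Next I would set up the local model. Near $z=0$ one has $f(z)=Az^{p+1}(1+O(z))$ with $A\neq0$, so as $c$ winds once around $0$ the $p+1$ roots of $f(z)-c$ colliding at $z=0$ undergo a rigid counterclockwise rotation by $2\pi/(p+1)$ about the fixed critical point $0$, while the remaining $q$ roots and the critical point $1$ stay away from this cluster and can be isotoped off without braiding. Hence $\rho(x)$ is supported on the $p+2$ strands consisting of the critical point $0$ together with its $p+1$ colliding roots, and equals the associated \emph{rotation braid}; the statement for $\rho(y)$ uses the local exponent $q+1$ at $z=1$.

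To convert the rotation braid into a word in the standard generators I would fix the linear order of all $n+2$ strands using the planar tree $T=f^{-1}(\text{segment from }0\text{ to }\mu)$. A Riemann--Hurwitz count gives $V-E=(n+1)-n=1$, so $T$ is a tree whose only vertices of valence $>1$ are $z=0$ (valence $p+1$) and $z=1$ (valence $q+1$); thus $T$ is a caterpillar with a central edge joining $0$ to $1$, carrying $p$ leaves at $0$ and $q$ leaves at $1$. The $n$ roots are the edge midpoints: $p$ on the leaf-edges at $0$, $q$ on the leaf-edges at $1$, and one \emph{shared} root on the central edge lying in both colliding clusters. The pulled-back flat structure makes the $p+1$ edges at $z=0$ equiangular, and a reading adapted to this tree is engineered to place the critical point $0$ at strand $2$, its roots at strands $1,3,4,\dots,p+2$ (the shared central-edge root being strand $p+2$), and symmetrically the critical point $1$ at strand $p+3$ with its roots at strands $p+2,p+4,\dots,p+q+3$. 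In such an arrangement the counterclockwise rotation braid at $0$ works out to $\sigma_1\sigma_2\cdots\sigma_{p+1}\sigma_1$, and the shifted computation at $1$ gives $\sigma_{p+2}\cdots\sigma_{p+q+2}\sigma_{p+2}$. As a consistency check, the underlying permutation of $\sigma_1\cdots\sigma_{p+1}\sigma_1$ fixes strand $2$ and cyclically permutes strands $1,3,\dots,p+2$, matching the $(p+1)$-cycle on the roots colliding at $0$.

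The main obstacle is the bookkeeping in this last step: pinning down the arrangement from the flat geometry (equivalently, a homeomorphism from the configuration of $f-\mu/2$ to the standard $n+2$ points on a line), absorbing the conjugation introduced by the radial travel, and expanding the rigid rotation into the precise positive word $\sigma_1\cdots\sigma_{p+1}\sigma_1$ rather than a conjugate or its inverse. Orientation and generator conventions must be matched with care; two guardrails are the permutation check above and the exponent-sum test that $(\sigma_1\cdots\sigma_{p+1}\sigma_1)^{p+1}$ should be the full twist on the $p+2$ cluster strands (both sides have exponent sum $(p+2)(p+1)$), reflecting that iterating the rotation $p+1$ times gives a full $2\pi$ rotation of the cluster.
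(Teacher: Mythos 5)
Your proposal is essentially the paper's own argument: the tree $T=f^{-1}([0,\mu])$ is exactly the paper's system of lifted arcs $\ell_i^0,\ell_i^1$ used to fix the identification with $B_{n+2}$ (yielding the same strand ordering $z_1,0,z_2,\dots,z_{p+1},1,z_{p+2},\dots,z_n$), and the rigid rotation of the $p+1$ colliding roots about the fixed critical point is precisely the paper's description of the lifted loops $\alpha_1,\dots,\alpha_{p+1}$ forming a single cycle through $z_1,\dots,z_{p+1}$. The final ``bookkeeping'' step you defer is the one the paper carries out by exhibiting these arcs explicitly and reading off half-twists, and your arrangement, target words, and consistency checks all agree with the paper's.
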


Technically, we have only defined the monodromy homomorphism $\rho$ up to conjugation in $B_{n+2}$. To define $x,y\in\sB_n[p,q]$, we chose to view $\sB_n[p,q]$ as the fundamental group based at $f-\mu/2$, i.e. $\pi_1(\Poly_n(\C)[p,q],f-\mu/2)$. Since $f-\mu/2$ has critical points at 0 and 1, to make sense of the monodromy homomorphism $\sB_n[p,q]\to B_{n+2}$, we must choose an identification of $B_{n+2}$ with the fundamental group $\pi_1(\UConf_{n+2}(\C),f^{-1}(\mu/2)\cup\{0,1\})$. We describe how to obtain the required identification in \Cref{identification}. In order to do this, and compute the monodromies $\rho(x)$ and $\rho(y)$, we first give a description of $f$ as a branched cover in \Cref{f-as-branched-cover}.

\subsection{Description of $f\colon\CP^1\to\CP^1$ as a branched cover}\label{f-as-branched-cover}

Construct a surface $S$ as follows. Begin with $n$ copies $S_1,\ldots,S_n$ of $\CP^1=\C\cup\{\infty\}$. In the first $p$ copies $S_1,\ldots,S_p$ cut a ``red'' slit between $0$ and $\infty$ along the interval $(-\infty,0)\subset\C$. In the last $q$ copies $S_{p+2},\ldots,S_{p+q+1}=S_n$ cut a ``blue'' slit between $1$ and $\infty$ along the interval $(1,\infty)\subset\C$. In the final remaining copy $S_{p+1}$ we cut both a red slit along $(-\infty,0)$ and a blue slit along $(1,\infty)$. See \Cref{pre-gluing}.

\begin{figure}
  \centering
  \begin{tikzpicture}
    \def\r{1}
    \def\normalspace{2.2*\r}
    \def\dotspace{2.7*\r}
    
    \draw[red, ultra thick] (-\r, 0) -- (-\r/4, 0);
    \draw[red, ultra thick] (\dotspace-\r, 0) -- (\dotspace-\r/4, 0);
    \draw[red, ultra thick] (\dotspace + \normalspace-\r, 0) -- (\dotspace + \normalspace-\r/4, 0);
    
    \draw[blue, ultra thick] (\dotspace + \normalspace+\r/4, 0) -- (\dotspace + \normalspace+\r, 0);
    \draw[blue, ultra thick] (\dotspace + 2*\normalspace+\r/4, 0) -- (\dotspace + 2*\normalspace+\r, 0);
    \draw[blue, ultra thick] (\dotspace + 2*\normalspace + \dotspace+\r/4, 0) -- (\dotspace + 2*\normalspace + \dotspace+\r, 0);

    \draw[color=lightgray, thick] (0, 0) circle (\r);
    \fill (-\r/4, 0) circle (2pt) node[below] {\tiny 0};
    \fill (\r/4, 0) circle (2pt) node[below] {\tiny 1};

    \draw[color=lightgray, thick] (\dotspace, 0) circle (\r);
    \fill (\dotspace-\r/4, 0) circle (2pt) node[below] {\tiny 0};
    \fill (\dotspace+\r/4, 0) circle (2pt) node[below] {\tiny 1};

    \draw[color=lightgray, thick] (\dotspace + \normalspace, 0) circle (\r);
    \fill (\dotspace + \normalspace-\r/4, 0) circle (2pt) node[below] {\tiny 0};
    \fill (\dotspace + \normalspace+\r/4, 0) circle (2pt) node[below] {\tiny 1};

    \draw[color=lightgray, thick] (\dotspace + 2*\normalspace, 0) circle (\r);
    \fill (\dotspace + 2*\normalspace-\r/4, 0) circle (2pt) node[below] {\tiny 0};
    \fill (\dotspace + 2*\normalspace+\r/4, 0) circle (2pt) node[below] {\tiny 1};

    \draw[color=lightgray, thick] (\dotspace + 2*\normalspace + \dotspace, 0) circle (\r);
    \fill (\dotspace + 2*\normalspace + \dotspace-\r/4, 0) circle (2pt) node[below] {\tiny 0};
    \fill (\dotspace + 2*\normalspace + \dotspace+\r/4, 0) circle (2pt) node[below] {\tiny 1};

    \node at (0, \r+0.3) {$S_1$};
    \node at (\dotspace, \r+0.3) {$S_p$};
    \node at (\dotspace + \normalspace, \r+0.3) {$S_{p+1}$};
    \node at (\dotspace + 2*\normalspace, \r+0.3) {$S_{p+2}$};
    \node at (\dotspace + 2*\normalspace + \dotspace, \r+0.3) {$S_n$};
    
    \node at (\dotspace/2, 0) {\dots};
    \node at (\dotspace + 2*\normalspace + \dotspace/2, 0) {\dots};
    \node at (\dotspace/2,\r+0.3) {\dots};
    \node at (\dotspace + 2*\normalspace + \dotspace/2, \r+0.3) {\dots};
  \end{tikzpicture}
  \caption{Patches of the surfaces $S_1,\ldots,S_n$}\label{pre-gluing}
\end{figure}
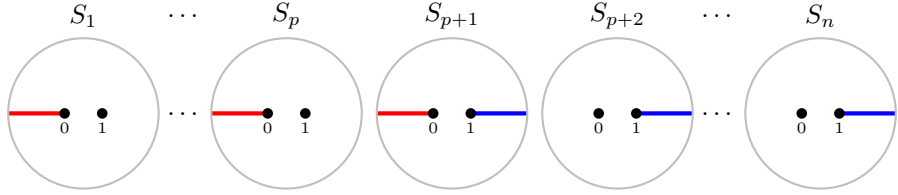

We now glue the surfaces $S_i$ as follows. For $1\leq i\leq p$, glue the bottom of the red slit of $S_i$ to the top of the red slit of $S_{i+1}$, and glue the bottom of the red slit of $S_{p+1}$ to the top of the red slit of $S_1$. For $p+1\leq i<n$, glue the top of the blue slit of $S_i$ to the bottom of the blue slit of $S_{i+1}$, and glue the top of the blue slit of $S_n$ to the bottom of the blue slit of $S_{p+1}$. Identify all copies of $\infty$ among $S_1,\ldots,S_n$, and separately identify all copies of $0$ among $S_1,\ldots,S_{p+1}$, and all copies of $1$ among $S_{p+1},\ldots,S_n$.

After doing all the gluing and identifications, we obtain a topological surface $S$ that is homeomorphic to the 2-sphere. The identifications of each $S_i$ with $\CP^1$ descend to a well-defined, degree $n$ branched covering map $S\to\CP^1$. This branched covering map induces a complex structure on $S$ by Riemann's existence theorem, and the resulting Riemann surface $S$ is isomorphic to $\CP^1$ by uniformization. We identify $S$ with $\CP^1$ via the unique isomorphism that sends the identified copy of $\infty$ from $S_1,\ldots,S_n$ to $\infty\in\CP^1$, the identified copy of $0$ from $S_1,\ldots,S_{p+1}$ to $0\in\CP^1$, and the identified copy of $1$ from $S_{p+1},\ldots,S_n$ to $1\in\CP^1$. We now view the original surfaces $S_1,\ldots,S_n$ as subsets of $\CP^1$. See \Cref{post-gluing} for a depiction of this away from $\infty$.

\begin{figure}
  \centering
  \begin{tikzpicture}
    \node at (0,0) {\includegraphics[scale=0.6]{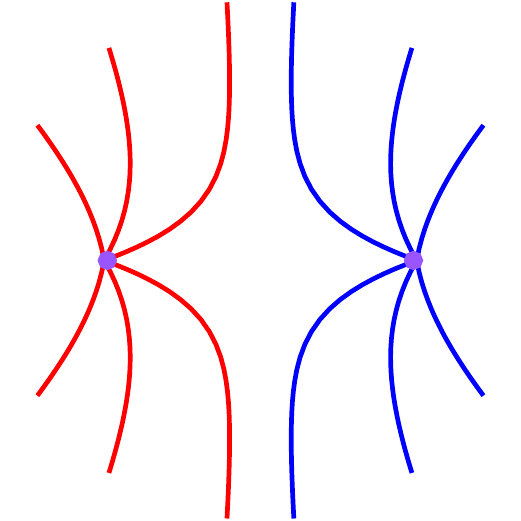}};
    \node at (-1.85,0) {\small $0$};
    \node at (1.85,0) {\small $1$};
    \node at (-0.85, -1.85) {\small $S_1$};
    \node at (-1.75, -1.35) {\small $S_2$};
    \node at (-1.8, 1.4) {\small $S_{p-1}$};
    \node at (-0.85, 1.85) {\small $S_p$};
    \node at (0,0) {\small $S_{p+1}$};
    \node at (0.85, 1.85) {\small $S_{p+2}$};
    \node at (1.77, 1.4) {\small $S_{p+3}$};
    \node at (1.8, -1.35) {\small $S_{n-1}$};
    \node at (0.85, -1.85) {\small $S_n$};

    \draw[color=lightgray, very thick] (0,0) circle (2.66);

    \fill[color=blue] (2.25, -0.4) circle (1pt);
    \fill[color=blue] (2.3, 0.0) circle (1pt);
    \fill[color=blue] (2.25, 0.4) circle (1pt);

    \fill[color=red] (-2.25, -0.4) circle (1pt);
    \fill[color=red] (-2.3, 0.0) circle (1pt);
    \fill[color=red] (-2.25, 0.4) circle (1pt);
  \end{tikzpicture}
  \caption{A patch of the glued surface $S\cong\CP^1$}\label{post-gluing}
\end{figure}

This identification of $S$ with $\CP^1$ identifies the degree $n$ branched cover $S\to\CP^1$ with a rational map $g\colon\CP^1\to\CP^1$. Since $g^{-1}(\infty)=\{\infty\}$, it follows that $g$ is a polynomial. The derivative $g'(z)$ of this polynomial is a scalar multiple of $f'(z)$, and $g(0)=0=f(0)$, while $g(1)=1$ and $f(1)=\mu$. Hence $f=\mu g$.

\subsection{Identification with the braid group}\label{identification}

Let $m\geq2$ and $X\in\UConf_m(\C)$. Consider an arc $\delta$ in $\C$, whose endpoints are two distinct elements of $X$, and whose interior does not meet $X$. The \emph{half twist} along $\delta$ is the element of $\pi_1(\UConf_m(\C),X)$ represented by a loop that fixes all elements of $S$ except the endpoints of $\delta$, which are swapped in a counterclockwise fashion, each endpoint traveling halfway around the boundary of a small thickening of $\delta$. Note that the definition of a half twist does not depend on any orientation $\delta$ may have, and only depends on $\delta$ up to homotopies that fix its endpoints.

To specify an isomorphism $B_m\cong\pi_1(\UConf_m(\C),X)$, it suffices to specify a collection of arcs $\delta_1,\ldots,\delta_{m-1}$ that connect pairs of points $\{x_1,x_2\},\{x_2,x_3\},\ldots,\{x_{m-1},x_m\}$, where $X=\{x_1,\ldots,x_m\}$. The interiors of these arcs must also avoid the points in the configuration $X$. Then the standard generators $\sigma_1,\ldots,\sigma_{m-1}$ of $B_m$ will correspond to the half twists about $\delta_1,\ldots,\delta_{m-1}$.

In our setting, we have $m=n+2$, and $X=f^{-1}(\mu/2)\cup\{0,1\}$. For each $1\leq i\leq n$, let $z_i$ be the unique point in $S_i$ such that $f(z_i)=\mu/2$, so that $X=\{z_1,\ldots,z_n,0,1\}$.

To specify arcs of the sort mentioned above, we will use the branched cover $f\colon\C\to\C$ to lift paths in $\C$ whose endpoints lie in $\{0,\mu/2,\mu\}$, to paths in $\C$ whose endpoints lie in $\{z_1,\ldots,z_n,0,1\}$. Choosing arcs of this form will allow us to easily read off the values of $\rho(x)$ and $\rho(y)$ in the braid group $B_{n+2}$.

\begin{definition}
  Define arcs $\alpha_1,\ldots,\alpha_n$, $\beta_1,\ldots,\beta_n$, $\ell_1^0,\ldots,\ell_n^0$, and $\ell_1^1,\ldots,\ell_n^1$ in $\C$ as follows. For each $1\leq i\leq n$, the arcs $\alpha_i,\beta_i,\ell_i^0,\ell_i^1$ all end at $z_i\in S_i$. Furthermore, they are all obtained by using $f$ to lift paths as follows:
  \begin{itemize}
  \item $\alpha_i$ is a lift of the counterclockwise circular loop centered at $0$, based at $\mu/2$.
  \item $\beta_i$ is a lift of the counterclockwise circular loop centered at $\mu$, based at $\mu/2$.
  \item $\ell_i^0$ is a lift of the line segment connecting $0$ and $\mu/2$.
  \item $\ell_i^1$ is a lift of the line segment connecting $1$ and $\mu/2$.
  \end{itemize}
\end{definition}

By definition of $x$ and $y$, the braids $\rho(x)$ and $\rho(y)$ are represented by the loops $\{\alpha_1,\ldots,\alpha_n,0,1\}$ and $\{\beta_1,\ldots,\beta_n,0,1\}$ in $\UConf_{n+2}(\C)$.

\begin{figure}
  \begin{minipage}[c]{0.47\linewidth}
  \hspace{0.13\linewidth}
  \begin{tikzpicture}
    \node at (0,0) {\includegraphics[scale=0.6]{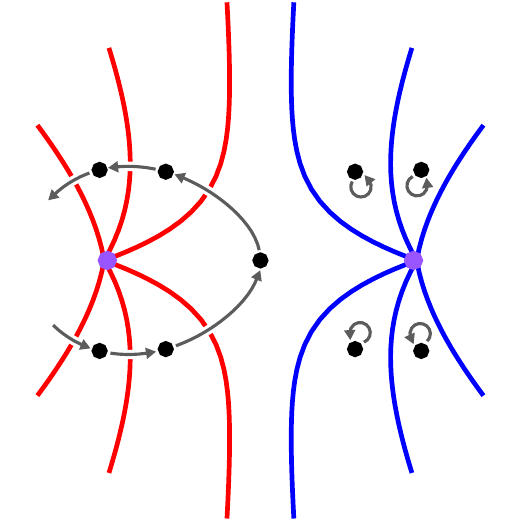}};
    \node at (-1.8,0) {\tiny $0$};
    \node at (1.8,0) {\tiny $1$};
    \node at (-0.85, -1.15) {\tiny $z_1$};
    \node at (-1.71, -1.16) {\tiny $z_2$};
    \node at (-1.71, 1.16) {\tiny $z_{p-1}$};
    \node at (-0.85, 1.12) {\tiny $z_p$};
    \node at (-0.4, -0.05) {\tiny $z_{p+1}$};
    \node at (0.85, 1.12) {\tiny $z_{p+2}$};
    \node at (1.71, 1.16) {\tiny $z_{p+3}$};
    \node at (1.71, -1.16) {\tiny $z_{n-1}$};
    \node at (0.85, -1.15) {\tiny $z_n$};

    \draw[color=lightgray, very thick] (0,0) circle (2.66);

    \fill[color=blue] (2.25, -0.4) circle (1pt);
    \fill[color=blue] (2.3, 0.0) circle (1pt);
    \fill[color=blue] (2.25, 0.4) circle (1pt);

    \fill[color=red] (-2.25, -0.4) circle (1pt);
    \fill[color=red] (-2.3, 0.0) circle (1pt);
    \fill[color=red] (-2.25, 0.4) circle (1pt);
  \end{tikzpicture}
  \captionsetup{width=\linewidth}
  \caption{The monodromy $\rho(x)$}\label{rho-x-monodromy}
  \end{minipage}
  \hfill
  \begin{minipage}[c]{0.47\linewidth}
  \hspace{0.13\linewidth}
  \begin{tikzpicture}
    \node at (0,0) {\includegraphics[scale=0.6]{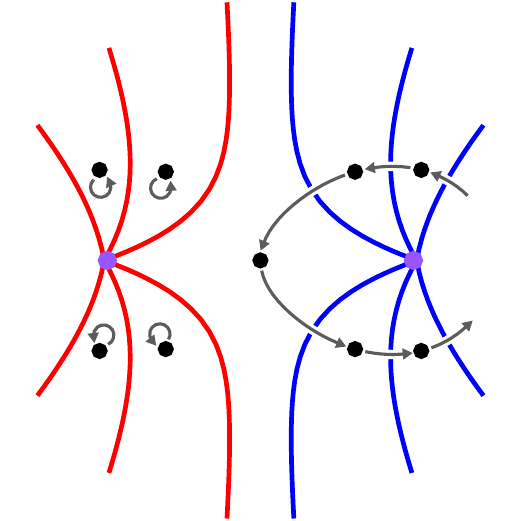}};
    \node at (-1.8,0) {\tiny $0$};
    \node at (1.8,0) {\tiny $1$};
    \node at (-0.85, -1.15) {\tiny $z_1$};
    \node at (-1.71, -1.16) {\tiny $z_2$};
    \node at (-1.71, 1.16) {\tiny $z_{p-1}$};
    \node at (-0.85, 1.12) {\tiny $z_p$};
    \node at (-0.4, -0.05) {\tiny $z_{p+1}$};
    \node at (0.85, 1.12) {\tiny $z_{p+2}$};
    \node at (1.71, 1.16) {\tiny $z_{p+3}$};
    \node at (1.71, -1.16) {\tiny $z_{n-1}$};
    \node at (0.85, -1.15) {\tiny $z_n$};

    \draw[color=lightgray, very thick] (0,0) circle (2.66);

    \fill[color=blue] (2.25, -0.4) circle (1pt);
    \fill[color=blue] (2.3, 0.0) circle (1pt);
    \fill[color=blue] (2.25, 0.4) circle (1pt);

    \fill[color=red] (-2.25, -0.4) circle (1pt);
    \fill[color=red] (-2.3, 0.0) circle (1pt);
    \fill[color=red] (-2.25, 0.4) circle (1pt);
  \end{tikzpicture}
  \captionsetup{width=\linewidth}
  \caption{The monodromy $\rho(y)$}\label{rho-y-monodromy}
  \end{minipage}
\end{figure}

From the description of $f\colon\C\to\C$ as a branched cover, the following topological facts are immediate:
\begin{itemize}
\item The arcs $\alpha_1,\ldots,\alpha_{p+1}$ form a single loop connecting the points $z_1,\ldots,z_{p+1}$, and each of these arcs only meet two of the $S_j$. For $p+2\leq i\leq n$, the arc $\alpha_i$ is a loop based at $z_i$ completely contained in $S_i$. See \Cref{rho-x-monodromy}.
\item The arcs $\beta_{p+1},\ldots,\beta_n$ form a single loop connecting the points $z_{p+1},\ldots,z_n$, and each of these arcs only meet two of the $S_j$. For $1\leq i\leq p$, the arc $\beta_i$ is a loop based at $z_i$ that is completely contained in $S_i$. See \Cref{rho-y-monodromy}.
\item For $1\leq i\leq p+1$, the arc $\ell_i^0$ connects $z_i$ and $0$, and the interior is completely contained in $S_i$.
\item For $p+1\leq i\leq n$, the arc $\ell_i^1$ connects $z_i$ and $1$, and the interior is completely contained in $S_i$.
\end{itemize}
In particular, we can represent $\rho(x)$ by the path $\{\alpha_1,\ldots,\alpha_{p+1},z_{p+2},\ldots,z_n,0,1\}$, and $\rho(y)$ by the path $\{z_1,\ldots,z_p,\beta_{p+1},\ldots,\beta_n,0,1\}$.

The sequence of arcs that we used to identify $\pi_1(\UConf_{n+2}(\C),\{z_1,\ldots,z_n,0,1\})$ with $B_{n+2}$ is as follows (see \Cref{sequence-of-arcs}):
\[
  \ell_1^0, \ell_2^0, \alpha_2, \ldots, \alpha_p, \ell_{p+1}^1, \ell_{p+2}^1, \beta_{p+2}, \ldots, \beta_{n-1}.
\]

\begin{figure}
  \centering
  \begin{tikzpicture}
    \node at (0,0) {\includegraphics[scale=0.6]{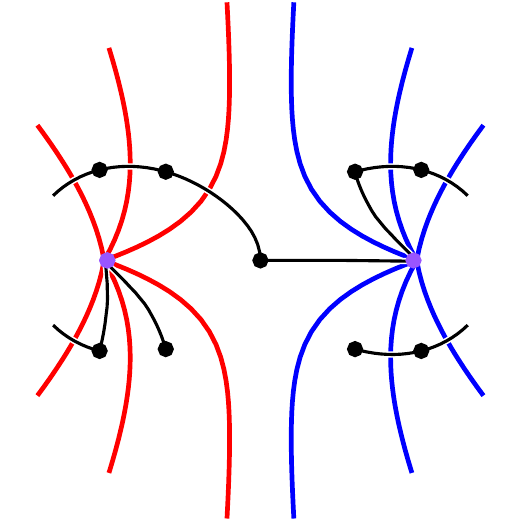}};
    \node at (-1.8,0) {\tiny $0$};
    \node at (1.8,0) {\tiny $1$};
    \node at (-0.85, -1.15) {\tiny $z_1$};
    \node at (-1.71, -1.16) {\tiny $z_2$};
    \node at (-1.71, 1.16) {\tiny $z_{p-1}$};
    \node at (-0.85, 1.12) {\tiny $z_p$};
    \node at (-0.4, -0.05) {\tiny $z_{p+1}$};
    \node at (0.85, 1.12) {\tiny $z_{p+2}$};
    \node at (1.71, 1.16) {\tiny $z_{p+3}$};
    \node at (1.71, -1.16) {\tiny $z_{n-1}$};
    \node at (0.85, -1.15) {\tiny $z_n$};

    \draw[color=lightgray, very thick] (0,0) circle (2.66);

    \fill[color=blue] (2.25, -0.4) circle (1pt);
    \fill[color=blue] (2.3, 0.0) circle (1pt);
    \fill[color=blue] (2.25, 0.4) circle (1pt);

    \fill[color=red] (-2.25, -0.4) circle (1pt);
    \fill[color=red] (-2.3, 0.0) circle (1pt);
    \fill[color=red] (-2.25, 0.4) circle (1pt);
  \end{tikzpicture}
  \captionsetup{width=\linewidth}
  \caption{The sequence of arcs $\ell_1^0, \ell_2^0, \alpha_2, \ldots, \alpha_p, \ell_{p+1}^1, \ell_{p+2}^1, \beta_{p+2}, \ldots, \beta_{n-1}$.}\label{sequence-of-arcs}
\end{figure}

Implicit in this sequence of arcs is the ordering $z_1,0,z_2,\ldots,z_{p+1},1,z_{p+2},\ldots,z_n$ of the elements of $\{z_1,\ldots,z_n,0,1\}$.

From our description of $\rho(x)$ and $\rho(y)$, as well as our observations about $\ell_i^0$ and $\ell_i^1$, it follows that with respect to our identification of $\pi_1(\UConf_{n+2}(\C),\{z_1,\ldots,z_n,0,1\})$ with $B_{n+2}$, we do indeed have
\[
  \rho(x) = \sigma_1\cdots\sigma_{p+1}\sigma_1, \qquad \rho(y) = \sigma_{p+2}\cdots\sigma_{n+1}\sigma_{p+2},
\]
as required. See \Cref{rho-x-braid,rho-y-braid}. This completes the proof of \Cref{explicit-monodromy}. \qed

\begin{figure}
  \centering
  \begin{minipage}[c]{0.48\linewidth}
    \begin{center}
    \begin{tikzpicture}
      \node at (0,0) {\includegraphics[width=0.73\textwidth]{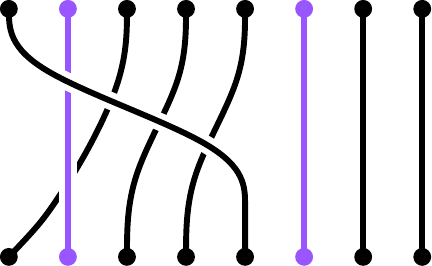}};
      \node at (-2.8,2) {\small $z_1$};
      \node at (-2,2.025) {\small $0$};
      \node at (-1.2,2) {\small $z_2$};
      \node at (-0.4,2) {\small $z_p$};
      \node at (0.4,2) {\small $z_{p+1}$};
      \node at (1.2,2.025) {\small $1$};
      \node at (2,2) {\small $z_{p+2}$};
      \node at (2.8,2) {\small $z_n$};
    \end{tikzpicture}
    \end{center}
    \captionsetup{width=0.9\linewidth,justification=centering}
    \caption{\\$\rho(x)\in B_{p+q+3}$ for $p=3$ and $q=2$.}\label{rho-x-braid}
  \end{minipage}
  \begin{minipage}[c]{0.48\linewidth}
    \begin{center}
    \begin{tikzpicture}
      \node at (0,0) {\includegraphics[width=0.73\textwidth]{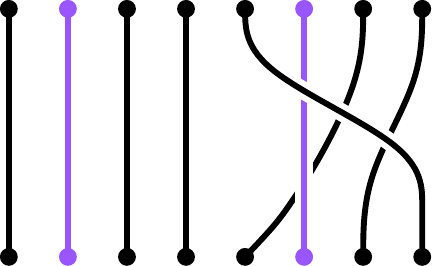}};
      \node at (-2.8,2) {\small $z_1$};
      \node at (-2,2.025) {\small $0$};
      \node at (-1.2,2) {\small $z_2$};
      \node at (-0.4,2) {\small $z_p$};
      \node at (0.4,2) {\small $z_{p+1}$};
      \node at (1.2,2.025) {\small $1$};
      \node at (2,2) {\small $z_{p+2}$};
      \node at (2.8,2) {\small $z_n$};
    \end{tikzpicture}
    \end{center}
    \captionsetup{width=0.9\linewidth,justification=centering}
    \caption{\\$\rho(y)\in B_{p+q+3}$ for $p=3$ and $q=2$.}\label{rho-y-braid}
  \end{minipage}
\end{figure}

\section{Construction of non-trivial elements in the kernel}\label{section:construction}

The following theorem establishes \Cref{maintheorem} by explicitly constructing a non-trivial element of $\sB_n[p,q]$ with trivial monodromy.

\begin{theorem}\label{kernel-element}
  The word
  \[
    [x^{p+1}, yx^py^{-1}xy]\in\sB_n[p,q]
  \]
  is a non-trivial element of $\ker(\rho\colon\sB_n[p,q]\to B_{n+2})$.
\end{theorem}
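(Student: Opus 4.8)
The plan is to verify two independent assertions: that $w := [x^{p+1}, yx^py^{-1}xy]$ is nontrivial in $\sB_n[p,q]$, and that $\rho(w) = 1$ in $B_{n+2}$. The nontriviality is the easy half. I would work inside the free subgroup $\langle x, y\rangle \cong F_2$ furnished by \Cref{generate-free-group}. A commutator $[a,b]$ vanishes in a free group exactly when $a$ and $b$ commute, which for nontrivial elements forces them to share a common cyclic root. Since $x$ is primitive, the centralizer of $x^{p+1}$ is $\langle x\rangle$, so it suffices to observe that $yx^py^{-1}xy$ is a freely reduced word (its syllables alternate between the two generators, as $p\ge 1$) that genuinely involves $y$, and hence does not lie in $\langle x\rangle$. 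Therefore the two commutator entries do not commute and $w\neq 1$.

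For the kernel membership, since $\rho$ is a homomorphism I would compute $\rho(w) = [\rho(x)^{p+1}, W]$, where $W := \rho(y)\rho(x)^p\rho(y)^{-1}\rho(x)\rho(y)$ and $\rho(x),\rho(y)$ are the explicit braids of \Cref{explicit-monodromy}. The first key step is to identify $\rho(x)^{p+1}$ with a single Dehn twist. Geometrically, $\rho(x)$ is the monodromy obtained by winding the critical value once around $0$; near the critical point $0$ of multiplicity $p$ the map $f$ looks like $z\mapsto z^{p+1}$, so $\rho(x)$ acts as the rotation by $2\pi/(p+1)$ of a disk $D$ whose marked points are exactly the $p+1$ roots $z_1,\dots,z_{p+1}$ together with the fixed central critical point $0$. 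Consequently $\rho(x)^{p+1}$ is the rotation by $2\pi$, namely the Dehn twist $T_c$ about $c=\partial D$, which encloses precisely the punctures in positions $1,\dots,p+2$. (For $p=1$ this is the familiar identity $(\sigma_1\sigma_2\sigma_1)^2 = \Delta^2$.) Using the conjugation formula $\phi T_c \phi^{-1} = T_{\phi(c)}$, the commutator $[T_c, W] = T_c\, T_{W(c)}^{-1}$ is trivial if and only if $W(c)=c$ as isotopy classes of simple closed curves (note $c$ is essential and nonperipheral, so $T_c$ determines $c$, as there are $p+2\ge 3$ punctures inside and $q+1\ge 2$ outside).

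It remains to show $W(c)=c$, which is the heart of the matter. A first, purely combinatorial check is that the permutation underlying $W$ preserves the set $\{1,\dots,p+2\}$ of punctures bounded by $c$: each of the five factors $\rho(y),\rho(x)^p,\rho(y)^{-1},\rho(x),\rho(y)$ cyclically rotates the punctures of one of two ``pinwheels'' meeting in the single shared root $z_{p+1}$ at position $p+2$, and propagating $\{1,\dots,p+2\}$ through these five cycles (the middle factor acting as $\pi_{\rho(x)}^{-1}$) indeed returns it to itself. Promoting this to an isotopy of curves is the main obstacle. I would do so by tracking $c$ itself through the same five factors in the geometric model of \Cref{section:description}: the factors $\rho(x),\rho(x)^p$ fix $c$ since they are supported in the disk it bounds, so the entire effect is produced by the three $\rho(y)^{\pm1}$ factors dragging the arc of $c$ that encircles $z_{p+1}$ around the second pinwheel, and one checks that these displacements cancel.

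An equivalent and more mechanical verification replaces the curve $c$ by the peripheral word $g_1g_2\cdots g_{p+2}$ in the free group $\pi_1$ of the punctured plane and checks, via the Artin action of the explicit braids $\rho(x),\rho(y)$, that $W$ carries this word to a conjugate of $g_1g_2\cdots g_{p+2}$. Either way, controlling the interaction of the two rotations at the shared puncture $z_{p+1}$ is the crux: the alternating form $yx^py^{-1}xy$ together with the exponent $p+1$ is exactly what is calibrated to make this interaction cancel, and turning the heuristic cancellation into a rigorous curve (or automorphism) computation is where I expect the real difficulty to lie.
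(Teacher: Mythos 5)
Your framework is correct and essentially coincides with the paper's: the nontriviality argument via \Cref{generate-free-group} is exactly the paper's, the identification of $\rho(x)^{p+1}$ with the Dehn twist about the curve $c$ enclosing $z_1,0,z_2,\dots,z_{p+1}$ is the paper's identity $\rho(x^{p+1})=\twist_{p+2}$, and the reduction of kernel membership to $W(c)=c$ via $[T_c,W]=T_cT_{W(c)}^{-1}$ is equivalent to the paper's formulation that $\rho(yx^py^{-1}xy)$ commutes with $\twist_{p+2}$. But the proof stops exactly where the content begins: you explicitly defer the verification of $W(c)=c$, and nothing you offer in its place establishes it. The permutation check is necessary but far from sufficient, since a braid can preserve the set $\{1,\dots,p+2\}$ setwise while badly distorting the isotopy class of $c$. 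The geometric heuristic is also not sound as stated: in the composite $W(c)=\rho(y)\rho(x)^p\rho(y)^{-1}\rho(x)\rho(y)(c)$ the factors $\rho(x)^p$ and $\rho(x)$ are applied not to $c$ but to intermediate curves such as $\rho(y)(c)$, which are no longer supported issues they ``fix''; indeed, already after the first $\rho(y)^{-1}$ the relevant puncture set has been carried outside $\{1,\dots,p+2\}$ (it picks up $z_n$), so the claimed pairwise cancellation of the three $\rho(y)^{\pm1}$ ``displacements'' is not a local matter at $z_{p+1}$ but depends delicately on the full word, including the asymmetric exponents $p$ and $1$.

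This missing verification is the entire substance of the paper's \Cref{magic-word-identity}: a chain of braid relations rewriting
\[
  \rho(yx^py^{-1}xy) = \twist_{p+1}\,\sigma_1^{-1}\sigma_{p+1}^{-1}\sigma_1\,\sigma_{p+3}\,\push_{p+3}\,\sigma_{[p+3,n+1]},
\]
in which every factor visibly commutes with $\twist_{p+2}$ (the first three lie in $B_{p+2}$ away from $\sigma_{p+2}$, the point push $\push_{p+3}$ centralizes $B_{p+2}$, and the remaining generators have index at least $p+3$). Some computation of this kind --- or the Artin-action computation on the peripheral word $g_1\cdots g_{p+2}$ that you mention but do not carry out --- must actually be performed; as written, the crux of the theorem is asserted rather than proved.
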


The non-triviality of this word follows immediately from \Cref{generate-free-group}: the centralizer of $x^{p+1}$ in the free group on $x$ and $y$ is $\langle x\rangle$, and $yx^py^{-1}xy$ is not a power of $x$. The main content of \Cref{kernel-element} is that the image of this word is trivial in $B_{n+2}$. To prove \Cref{kernel-element} we will write $\rho(yx^py^{-1}xy)$ as a product of braids that all commute with $\rho(x^{p+1})$. To facilitate writing this product, we begin by defining some special elements of the braid group.

\subsection{Products of consecutive standard generators}

Let $m\geq2$. For $1\leq i,j<m$ we define the following elements of the braid group $B_m$.
\[
  \sigma_{[i,j]} \coloneqq \begin{cases} \sigma_i\cdots\sigma_j & i \leq j \\ 1 & i > j, \end{cases} \qquad \bar{\sigma}_{[i,j]} \coloneqq \begin{cases} \sigma_j\cdots\sigma_i & i \leq j \\ 1 & i > j. \end{cases}
\]
\begin{figure}
  \centering
  \hspace{0.09\textwidth}\includegraphics[width=0.3\textwidth]{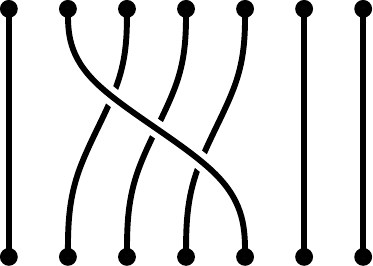}\hfill
  \includegraphics[width=0.3\textwidth]{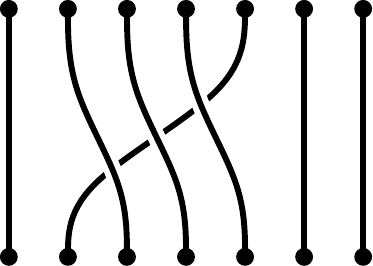}\hspace{0.075\textwidth}
  \caption{The braids $\sigma_{[2,4]}$ (left) and $\bar{\sigma}_{[2,4]}$ (right) in $B_7$.}\label{consecutive-sigma-product}
\end{figure}
See \Cref{consecutive-sigma-product} for an example depicting these braids. The commuting relations in the braid group imply that for $1\leq i,j,i',j'<m$
\[
  \sigma_{[i,j]}\sigma_{[i',j']} = \sigma_{[i',j']}\sigma_{[i,j]}, \quad \text{if } |i'-j|\geq2 \text{ or } |i-j'|\geq2.
\]

For $1\leq i<m-1$, the braid relation $\sigma_i\sigma_{i+1}\sigma_i=\sigma_{i+1}\sigma_i\sigma_{i+1}$ can be expressed by the identity
\[
  \sigma_{[i,i+1]}\sigma_i = \sigma_{i+1}\sigma_{[i,i+1]}.
\]
In fact, if $1\leq i<j<m$, then we have
\[
  \sigma_{[i,j]}\sigma_i = \sigma_{[i,i+1]}\sigma_{[i+2,j]}\sigma_i = \sigma_{[i,i+1]}\sigma_i\sigma_{[i+2,j]} = \sigma_{i+1}\sigma_{[i,i+1]}\sigma_{[i+2,j]} = \sigma_{i+1}\sigma_{[i,j]}.
\]

\subsection{Dehn twists}

Let $2\leq k\leq m$ and define the following element of $PB_m$:
\[
  \twist_k \coloneqq (\sigma_{[1,k-1]}\sigma_1)^{k-1}.
\]
\begin{figure}
  \begin{minipage}[c]{0.48\linewidth}
    \begin{center}
    \includegraphics[width=0.625\textwidth]{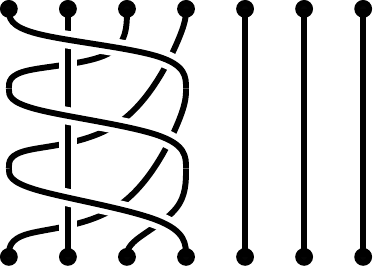}
    \captionsetup{width=\linewidth}
    \caption{Illustration of $\twist_4$ in $B_7$.}\label{dehn-twist}
    \end{center}
  \end{minipage}
  \hfill
  \begin{minipage}[c]{0.48\linewidth}
    \begin{center}
    \includegraphics[width=0.68\textwidth]{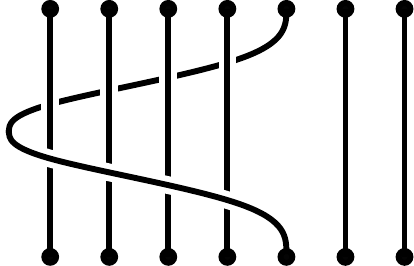}
    \captionsetup{width=\linewidth}
    \caption{Illustration of $\push_5$ in $B_7$.}\label{point-push-braid}
    \end{center}
  \end{minipage}
\end{figure}
See \Cref{dehn-twist} for an example of this braid. If we view $B_m$ as a mapping class group of an $m$-times punctured closed disk $D_m$, then $\twist_k$ can be represented by a Dehn twist about the boundary of a disk $D_k$ containing the first $k$ punctures. This disk $D_k$ is such that the inclusion $D_k\subseteq D_m$ induces the standard inclusion homomorphism $B_k\to B_m$.

Therefore, $\twist_k$ commutes with each of $\sigma_1,\ldots,\sigma_{k-1}$ and $\sigma_{k+1},\ldots,\sigma_{m-1}$.

\subsection{Point pushes}

Let $2\leq k\leq m$, and define the following element of $PB_m$:
\[
  \push_k \coloneqq \bar{\sigma}_{[1,k-1]}\sigma_{[1,k-1]} = (\sigma_{k-1}\cdots\sigma_1)(\sigma_1\cdots\sigma_{k-1}).
\]
See \Cref{point-push-braid} for an example of this braid. Once again view $B_m$ as a mapping class group of an $m$-times punctured closed disk $D_m$, and let $D_{k-1}\subset D_k\subseteq D_m$ be inclusions that induce the standard inclusion homomorphisms $B_{k-1}\to B_k\to B_m$. Define the closed annulus $A_k\coloneqq D_k-\interior(D_{k-1})$. Then $\push_k$ is the image of a counterclockwise generator of $\pi_1(A_k,p_k)$ under the point-pushing homomorphism $\pi_1(A_k,p_k)\to PB_m$. See \Cref{disk-push-diagram}.

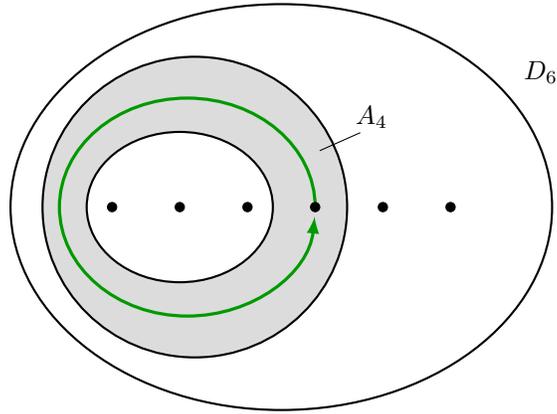
\begin{figure}
  \centering
  \begin{tikzpicture}
    \def\r{3}
    \def\s{3/4}
    \draw[thick] (0,0) ellipse ({1.2*\r} and {0.9*\r});

    \draw[thick, fill={rgb,255:red,220; green,220; blue,220}] ({-0.25-2/5*(\r-\s)},0) ellipse ({0.9*(\r - \s)} and 2);
    \draw[thick, fill=white] ({-3/5*(\r-\s)},0) ellipse ({2.75/5*(\r - \s)} and 1);

    \foreach \i in {0,...,5} {
      \node (P\i) at ({-3 + \s + 2/5*(\r-\s)*\i}, 0) {};
    }

    \draw [-latex, very thick, color={rgb,255:red,0; green,153; blue,0}] (P3) arc [start angle=0, end angle=355, x radius=1.7, y radius=1.45];

    \foreach \i in {0,...,5} {
      \draw[fill=black] (P\i) circle(1.75pt);
    }

    \node at (1.15*\r, 0.6*\r) {$D_6$};
    \node at (0.4*\r, 0.4*\r) {$A_4$};
    \draw (0.35*\r, 0.33*\r) -- (0.17*\r, 0.25*\r);
\end{tikzpicture}

  \caption{A path in $A_4\subset D_4\subset D_6$ representing the generator of $\pi_1(A_4,p_4)$ corresponding to $\push_4$ in $B_6$}
  \label{disk-push-diagram}
\end{figure}

Therefore, $\push_k$ commutes with each of $\sigma_1,\ldots,\sigma_{k-2}$ and $\sigma_{k+1},\ldots,\sigma_{m-1}$. We also have the following identity in $PB_m$:
\[
  \twist_{k+1} = \twist_k\cdot\push_{k+1}.
\]
\begin{figure}
  \centering
  \hspace{0.115\textwidth}\includegraphics[width=0.3\textwidth]{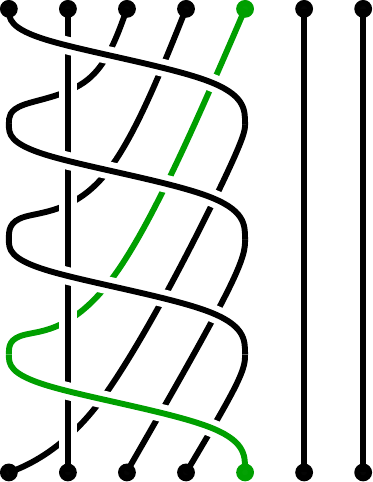}\hfill
  \includegraphics[width=0.32\textwidth]{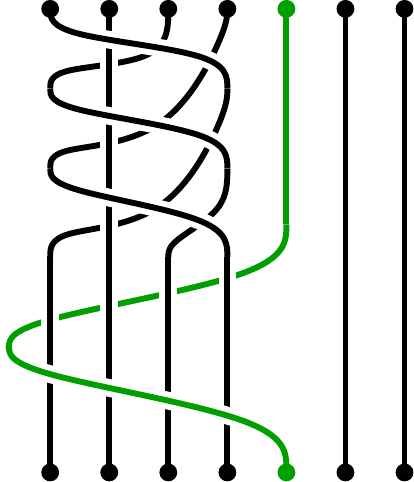}\hspace{0.115\textwidth}
  \caption{The braids $\twist_5$ (left) and $\twist_4\cdot\push_5$ (right) in $B_7$. The fifth strand is colored green only for visual clarity.}\label{twist-and-push-identity}
\end{figure}
See \Cref{twist-and-push-identity} for a depiction of this identity. To justify this identity, note that each term is supported on the once-marked annulus $A_k$. The identity asserts that the outer Dehn twist $\twist_{k+1}$ is equal to the inner Dehn twist $\twist_k$ composed with the counter-clockwise point push $\push_k$. It is not hard to see that $\twist_k\cdot\push_k\cdot\twist_{k+1}^{-1}$ may be represented by a full rotation of the annulus, i.e.\ the identity. Alternatively, the identity is easy to verify directly in the braid group when $k=2$, and the above description implies that its truth does not depend on the value of $k$.

\subsection{Rewriting $\rho(yx^py^{-1}xy)$}

We are now in a position to express $\rho(yx^py^{-1}xy)$ as a product of braids that all commute with $\rho(x^{p+1})$.

\begin{lemma}\label{magic-word-identity}
  The following identity holds:
  \[
    \rho(yx^py^{-1}xy) = \twist_{p+1}\sigma_1^{-1}\sigma_{p+1}^{-1}\sigma_1\sigma_{p+3}\push_{p+3}\sigma_{[p+3,n+1]}.
  \]
\end{lemma}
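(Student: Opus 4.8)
The statement is an identity in $B_{n+2}$, so my plan is to expand both sides using the explicit formulas $\rho(x)=\sigma_{[1,p+1]}\sigma_1$ and $\rho(y)=\sigma_{[p+2,n+1]}\sigma_{p+2}$ from \Cref{explicit-monodromy}, and then transform the left-hand side into the right-hand side purely by braid relations, organized through the consecutive-product, twist, and push identities already established in this section.

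The linchpin is the observation that $\rho(x)^{p+1}=(\sigma_{[1,p+1]}\sigma_1)^{p+1}=\twist_{p+2}$, which is immediate from the definition of $\twist_{p+2}$. Since $\twist_{p+2}$ is the boundary twist of the disk enclosing the first $p+2$ punctures, it is central in the standard copy of $B_{p+2}$, hence commutes with $\sigma_1,\dots,\sigma_{p+1}$ (in particular with $\rho(x)$) and with $\sigma_{p+3},\dots,\sigma_{n+1}$. I would first use this to replace the long power $\rho(x)^p$ by $\twist_{p+2}\,\rho(x)^{-1}$, reducing $\rho(yx^py^{-1}xy)$ to $\rho(y)\,\twist_{p+2}\,\rho(x)^{-1}\,\rho(y)^{-1}\,\rho(x)\,\rho(y)$. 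This is exactly what keeps the computation finite and uniform in $p$: the remaining word involves only a single copy of $\rho(x)^{-1}$ rather than a growing power.

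Next I would exploit that the two blocks meet only along the boundary generators $\sigma_{p+1},\sigma_{p+2},\sigma_{p+3}$. Writing $\rho(y)=\sigma_{p+2}\,\sigma_{[p+3,n+1]}\,\sigma_{p+2}$, the deep interior factor $\sigma_{[p+4,n+1]}$ commutes with $\twist_{p+2}$ and with every generator occurring in $\rho(x)^{\pm1}$, so in the relevant places it passes freely and helps reassemble the tail $\sigma_{p+3}\,\push_{p+3}\,\sigma_{[p+3,n+1]}$ on the right. All genuine interaction is thereby confined to $\sigma_{p+1},\sigma_{p+2},\sigma_{p+3}$, and is handled by the braid relations $\sigma_j\sigma_{j+1}\sigma_j^{-1}=\sigma_{j+1}^{-1}\sigma_j\sigma_{j+1}$ together with the slide identity $\sigma_{[i,j]}\sigma_i=\sigma_{i+1}\sigma_{[i,j]}$.

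The heart of the argument, and the step I expect to be the main obstacle, is tracking how the central twist $\twist_{p+2}$ passes through the two copies of $\sigma_{p+2}$ coming from $\rho(y)$ and $\rho(y)^{-1}$, since $\sigma_{p+2}$ is precisely the one generator with which $\twist_{p+2}$ fails to commute. This is where twists must be converted into pushes: I would use $\twist_{p+2}=\twist_{p+1}\push_{p+2}$ to extract the $\twist_{p+1}$ appearing on the right-hand side, and $\twist_{p+3}=\twist_{p+2}\push_{p+3}$ (which, being the boundary twist of the disk on the first $p+3$ strands, does commute with $\sigma_{p+2}$) to recognize the conjugate of $\sigma_{p+2}$ wrapping around the first $p+2$ punctures as the point-push $\push_{p+3}=\bar{\sigma}_{[1,p+2]}\sigma_{[1,p+2]}$. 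The leftover contribution of the $x$-block collapses to the bridging word $\sigma_1^{-1}\sigma_{p+1}^{-1}\sigma_1$ after applying the braid relation to the surviving $\rho(x)^{-1}$; normalizing with the slide identities then yields the stated right-hand side. As a sanity check I would first verify the entire identity by hand in the smallest case $p=q=1$ in $B_5$, where every mechanism above (the twist $\rho(x)^2=\twist_3$, the boundary conjugation by $\sigma_3$, and the final reduction via a half-twist conjugation in the $x$-block) already appears in miniature.
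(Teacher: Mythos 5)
Your plan is essentially the paper's proof: both hinge on $\rho(x)^{p+1}=\twist_{p+2}$ to absorb the $p$-th power uniformly, on the identity $\twist_{p+2}=\twist_{p+1}\push_{p+2}$ together with $\sigma_{p+2}\push_{p+2}\sigma_{p+2}=\push_{p+3}$, and on confining all interaction between the two blocks to the generators $\sigma_{p+1},\sigma_{p+2},\sigma_{p+3}$ via support considerations. The only substantive step you assert rather than carry out --- that the surviving copy of $\rho(x)^{\pm1}$ collapses to the bridge word $\sigma_1^{-1}\sigma_{p+1}^{-1}\sigma_1$ --- is precisely the short braid-relation computation $(\sigma_{p+3}\sigma_{p+2})^{-1}\rho(x)=\rho(x)\sigma_{p+2}(\sigma_1^{-1}\sigma_{p+1}^{-1}\sigma_1)(\sigma_{p+2}^{-1}\sigma_{p+3}^{-1})$ that the paper performs explicitly.
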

See \Cref{complicated-identity} for an example of this identity. Note that the factor $\sigma_1^{-1}\sigma_{p+1}^{-1}\sigma_1$ simplifies to $\sigma_{p+1}^{-1}$ if and only if $p\geq2$.
\begin{figure}
  \centering
  \hspace{0.075\textwidth}\includegraphics[width=0.35\textwidth]{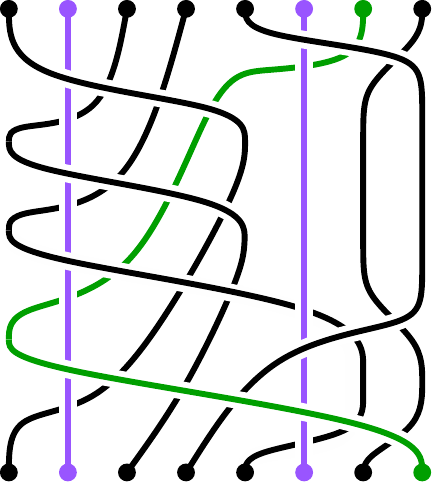}\hfill
  \includegraphics[width=0.38\textwidth]{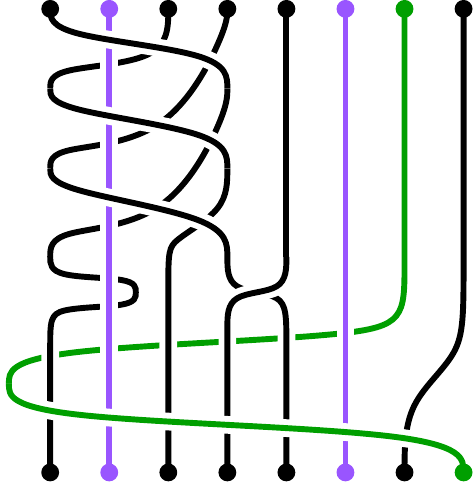}\hspace{0.075\textwidth}
  \caption{The left and right sides of the identity from \Cref{magic-word-identity} when $p=3$ and $q=2$. The second and sixth strands corresponding to critical points are colored purple, whereas the seventh strand is colored green only for visual clarity.}\label{complicated-identity}
\end{figure}

\begin{proof}
We will compute $\rho(yx^px^{-1}y^{-1})$ in chunks, and combine everything at the end. By \Cref{explicit-monodromy} we have
\begin{align*}
  \rho(x) &= \sigma_{[1,p+1]}\sigma_1 = \sigma_2\sigma_{[1,p+1]}, \\
  \rho(y) &= \sigma_{[p+2,n+1]}\sigma_{p+2} = \sigma_{p+3}\sigma_{[p+2,n+1]}.
\end{align*}

Hence,
\begin{align*}
  \rho(yx^py^{-1}) &= (\sigma_{p+3}\sigma_{[p+2,n+1]})\rho(x^p)(\sigma_{p+3}\sigma_{[p+2,n+1]})^{-1} \\
  &= (\sigma_{p+3}\sigma_{p+2})\rho(x^p)(\sigma_{p+3}\sigma_{p+2})^{-1},
\end{align*}

where the last equality holds because $\rho(x)\in B_{p+2}$ commutes with $\sigma_{[p+3,n+1]}$. We also have
\begin{align*}
  (\sigma_{p+3}\sigma_{p+2})^{-1}\rho(x) &= \sigma_{p+2}^{-1}\sigma_{p+3}^{-1}\sigma_{[1,p+1]}\sigma_1 \\ &= \sigma_{[1,p]}\sigma_{p+2}^{-1}\sigma_{p+1}\sigma_1\sigma_{p+3}^{-1} \\
                                   &= \sigma_{[1,p]}(\sigma_{p+1}\sigma_{p+2}\sigma_{p+1}^{-1}\sigma_{p+2}^{-1})\sigma_1\sigma_{p+3}^{-1} \\
                                   &= \sigma_{[1,p+1]}\sigma_{p+2}\sigma_{p+1}^{-1}\sigma_1\sigma_{p+2}^{-1}\sigma_{p+3}^{-1} \\
                                   &= \rho(x)\sigma_1^{-1}(\sigma_{p+2}\sigma_{p+1}^{-1}\sigma_1\sigma_{p+2}^{-1}\sigma_{p+3}^{-1}) \\
                                   &= \rho(x)\sigma_{p+2}(\sigma_1^{-1}\sigma_{p+1}^{-1}\sigma_1)(\sigma_{p+2}^{-1}\sigma_{p+3}^{-1}).
\end{align*}
Furthermore,
\begin{align*}
  (\sigma_{p+2}^{-1}\sigma_{p+3}^{-1})\rho(y)=\sigma_{p+2}^{-1}\sigma_{p+3}^{-1}\sigma_{p+3}\sigma_{[p+2,n+1]} = \sigma_{[p+3,n+1]}.
\end{align*}
We will also use the fact that
\[
  \rho(x^{p+1}) = \twist_{p+2}.
\]

Therefore,
\begin{align*}
  \rho(yx^py^{-1}xy) &= (\sigma_{p+3}\sigma_{p+2})\rho(x^p)(\sigma_{p+3}\sigma_{p+2})^{-1}\rho(x)\rho(y) \\
               &= (\sigma_{p+3}\sigma_{p+2})\rho(x^{p+1})\sigma_{p+2}(\sigma_1^{-1}\sigma_{p+1}^{-1}\sigma_1)(\sigma_{p+2}^{-1}\sigma_{p+3}^{-1})\rho(y) \\
               &= (\sigma_{p+3}\sigma_{p+2})\rho(x^{p+1})\sigma_{p+2}(\sigma_1^{-1}\sigma_{p+1}^{-1}\sigma_1)\sigma_{[p+3,n+1]} \\
               &= (\sigma_{p+3}\sigma_{p+2})\twist_{p+2}\sigma_{p+2}(\sigma_1^{-1}\sigma_{p+1}^{-1}\sigma_1)\sigma_{[p+3,n+1]} \\ &= (\sigma_{p+3}\sigma_{p+2})\twist_{p+1}\push_{p+2}\sigma_{p+2}(\sigma_1^{-1}\sigma_{p+1}^{-1}\sigma_1)\sigma_{[p+3,n+1]} \\
                     &= \sigma_{p+3}\twist_{p+1}\push_{p+3}(\sigma_1^{-1}\sigma_{p+1}^{-1}\sigma_1)\sigma_{[p+3,n+1]} \\
                     &= \twist_{p+1}\sigma_1^{-1}\sigma_{p+1}^{-1}\sigma_1\sigma_{p+3}\push_{p+3}\sigma_{[p+3,n+1]}.\qedhere
\end{align*}
\end{proof}

\subsection{Proof that $\rho(x^{p+1})$ and $\rho(yx^py^{-1}xy)$ commute}

\begin{proof}
  It was observed earlier that $\twist_{p+2}$ commutes with $\sigma_1,\ldots,\sigma_{p+1}$ and $\sigma_{p+3},\ldots,\sigma_{n+1}$. We also observed that $\push_{p+3}$ commutes with everything in $B_{p+2}$, including $\twist_{p+2}$. Hence $\rho(x^{p+1})=\twist_{p+2}$ commutes with $\rho(yx^py^{-1}xy)$ by \Cref{magic-word-identity}.
\end{proof}

This completes the proof of \Cref{kernel-element}, and hence also \Cref{maintheorem}.

\bibliographystyle{alpha}
\bibliography{references}

\end{document}